\begin{document}

\title{A Mathematical Model of Population Growth as a Queuing System}
\shorttitle{A Mathematical Model of Population Growth as a Queuing System}

\author{Mariia Nosova\affil{1}}
\abbrevauthor{M. Nosova}
\headabbrevauthor{Nosova, M}

\address{%
\affilnum{1}Tomsk University of Control Systems and Radioelectronics, Tomsk, Russia}

\correspdetails{nosovamgm@gmail.com}

\received{}
\revised{}
\accepted{}

\communicated{ }

\begin{abstract}

In this article, a new mathematical model of human population growth as an autonomous
non-Markov queuing system with an unlimited number of servers and two 
types of applications is proposed. The research of this system was carried out a 
virtual phase method and a modified method of asymptotic analysis of a 
stochastic age-specific density for a number of applications 
served in the system at time t and was proofed that the asymptotic 
distribution is Gaussian. The main probabilistic characteristics of
this distribution are found. The mathematical model and methods 
of its research can be applied to analysis of
the population growth both in a single country and around the world.
\end{abstract}

\maketitle

\section{Introduction}

Human society is a complex system that is
constantly evolving and changing. Recently, researchers are
particularly attracted to the field of demography. On the one hand,
there is a constant increase in the worlds
population, on the other hand, in a number of countries the population
is decreasing. To develop an effective demographic policy in the modern
economy, the analysis and forecasting of the processes of reproduction
of the size and structure of the population are of particular
relevance.

Population projections are used in various fields of state and regional management, in marketing research and
insurance. Estimates of the future demographic situation are extremely
useful in the development of any socioeconomic development programs,
and in planning for the energy sector, transport production, and
services. The significance of this kind of research is also dictated by
the fact that the aggravation of the
demographic situation is the result of serious economic and social
changes that have occurred in society over the past several decades.
Demographic changes affect all areas of the economy. In order to manage
reproductive processes and solve global problems of human development,
modeling of demographic processes is necessary.

\section{Mathematical Methods and Models in Demography: A Review} 

Many research works are devoted to the issues
of modeling demographic processes. The result of these studies is an
extensive scientific material focused on the study and forecasting of
demographic processes, both in a single country and around the world.

For demographic forecasting of the size and
composition of the population, statistical methods and mathematical
modeling methods are used. The use of mathematical models refers to the
modern and most effective method of demographic analysis and consists
of determining some initial assumptions regarding the dynamics of
demographic processes.

Depending on whether the demographic model
takes into account the deviation of the frequencies of demographic
events from their probabilities, demographic models are usually divided
into stochastic (or probabilistic) and deterministic. Also, demographic
models are divided into continuous and discrete.

The first mathematical models of reproduction
include models of reproduction of biological populations. The simplest
model known as Fibonacci is the problem of breeding a single pair of
rabbits, which was described in 1202 by Leonardo Pisano (about 1170 --
about 1250). 

The next time a model of the total population
in demography is used, it is the model of the English scientist Thomas
Robert Malthus (1766 -- 1834) \cite{bibid23}. In the Malthus model, the
population has exponential growth. Note that the Malthus population and
Fibonacci population models are deterministic continuous models, in
particular, discrete models.

As for the simple deterministic models of
human growth, these include, first of all, models of linear and
exponential growth  \cite{bibid16, bibid17, bibid38}.  In the linear growth model, a change in
population occurs with constant absolute growth. It should be noted
that the linear growth model gives satisfactory results only for a
short period, while extension for a longer period neither in the past
nor in the future gives adequate results.

A stationary model of population reproduction
is also found in the literature \cite{bibid10}. In such a model, it is assumed
that the birth rate is equal to mortality, that is, with constant
population size, the growth is zero. Thus, in the stationary
population, the total number and number in the age-sex groups remain
constant. 

The best-known approach to describing the
population reproduction model as a whole is the theory of a stable
population \cite{bibid14, bibid25, bibid26, bibid30}. A stable population is characterized by
constant age-related intensities of fertility, mortality, and age
structure of the population. A population with an invariable survival
function is called the exponential population. It corresponds to all
the basic formulas of the theory of a stable population, which do not
include the fertility function. It is obvious that the stationary
population is a special case of a stable population.

Continuous and discrete analogs of the stable
population model are known. Continuous models are based on the integral
equation of population reproduction (Lotka-Volterra equation), while
discrete models are based on the matrix model (Leslie matrix).

The idea of exponential
population growth as the basis of a stable population was first
introduced by the English actuary M. Hale. The development of the
theory of a stable population is associated with such names as L. Euler
(1707 -- 1783), G. Knapp (1842--1926), V. Lexis (1837 -- 1914), J.
Lotka (1880 -- 1949), V. Bortkevich (1868 -- 1931). Note that J. Lotka
developed a theory of a stable population in continuous form for one
gender. The discrete theory of a stable population, using the discrete
approach of W. Feller (1906 -- 1970), was constructed by P. Leslie
\cite{bibid22}. A generalization of the theory of a stable population was carried
out by P. Vincent, J. Hajnal (1924 -- 2008),
P. Carmel, K. Gini (1884 -- 1965), J. Bourgeois-Pichat (1912 -- 1990)
\cite{bibid4}, L. Anry, N. Keyfitz \cite{bibid18, bibid19}, A. Ya. Boyarsky (1906 -- 1985), as
well as H. Hyrenius (1914--1979), A. Rorrers \cite{bibid33}, and B F. Shukailo
(1932 -- 1983). The Russian demographers S. A. Novoselsky (1872 --
1953), V.V. Paevsky (1893 --1934), A. Ya. Boyarsky (1906 -- 1985), I.G.
Venetsky (1914--1981) and others \cite{bibid5, bibid39} made a great contribution to
the development of practical methods for the practical application of a
stable population.

The current stage of development of the theory
of a stable population is associated with a generalization of the main
conclusions for the case of demographic processes with variable
intensities (fertility and mortality processes in the form of time
series, random processes). These have been explored by A. Coale (1917
-- 2002) \cite{bibid8} and A. Lopes, as well as R. Lee \cite{bibid21}, D. Ahlburg \cite{bibid1}, Z.
Sykes \cite{bibid37}, Carter, M. Alho \cite{bibid2}, B. Spencer \cite{bibid3}, J. Pollard \cite{bibid29}, J.
Cohen \cite{bibid9}, N. Keyfitz \cite{bibid18, bibid19}, H. Caswell \cite{bibid6, bibid7}, M. Trevisan \cite{bibid7}, L.
Goodman \cite{bibid11} and others. 

At the end of the nineteenth century,
scientists made an attempt to use the logistic curve as a model
describing the growth of mankind \cite{bibid6, bibid7, bibid10, bibid16, bibid17, bibid25, bibid26}. R. Pearl
(1879 -- 1940) and L. Reed \cite{bibid31} adhered to this model in their
research. The equation of this curve was introduced for the first time
in 1835 by the Belgian mathematician F. Verhulst (1804 -- 1849), whose
general idea was to superimpose on the exponential population growth
some factor slowing down this growth and increasing its effect as the
population grew. It should be borne in mind that the use of R. Pearl
and other demographers of the logistic curve for abstract calculations
of the population meant ignoring the influence of socioeconomic factors
on the population. In this regard, attempts to use the logistic curve
to predict the population for a fairly distant future led to unreliable
and incorrect conclusions. The logistic curve can be useful only for a
short-term forecast of the population, as well as an approximation of
the dynamics of some demographic indicators, for example, fertility,
mortality, survival function, etc. \cite{bibid20}.

Another deterministic model of population
reproduction is the model of hyperbolic population change. The first
assumption that the population growth rate is proportional to the
square of the population was observed in 1960 by V. Foerster (1911 --
2002), P. Mora, and L. Amiot. According to the same formula, I. S.
Shklovsky predicted the worlds population from 1600 to
1960 \cite{bibid39}. At present, S. P. Kapitsa is working in this direction \cite{bibid15}.

A significant contribution to mathematical
demography was made by Russian scientist Oleg Staroverov (1933--2006)
\cite{bibid35, bibid36}, who considered demographic processes in the form of Markov
models in the form of Markov chains. O. Staroverov, applying the theory
of Poisson flows, suggested that all demographic processes, including
migration, have probabilistic properties: stationarity, lack of
aftereffect, ordinary. From the simplest assumptions, O. Staroverov
obtained models of demography, migration, inter-industry, and social
movement of the population. Models of the natural movement of the
population were studied in discrete and continuous form; for them, the
basic equations were obtained in \cite{bibid35, bibid36}.

In \cite{bibid35}, O. Staroverov proposed a stochastic
model of population development with discrete-time, which takes into
account randomness in both fertility and mortality. The essence of the
study using such a model is as follows: the concept of an
individuals state is introduced at a point in time,
determined by the age group in which he is located, and possible
transitions and probabilities of these transitions are determined. In
addition, the use of Markov chains is found in \cite{bibid12}.

In a separate group, we can distinguish models
that consider the demographic process of population change as a
branching process, in a particular case as a process of death and
reproduction \cite{bibid11, bibid29, bibid32}.

A special place in demographic modeling is
occupied by the method of moving ages \cite{bibid24, bibid30, bibid34, bibid40}. The component
method or the method of moving ages was developed by P.K. Whelpton
(1893 -- 1964) \cite{bibid40}. In this method, the population distribution by age
is taken as the basis and the numbers of
individual age groups are gradually shifted in accordance with the
indicators of mortality tables.

In Russia, prospective estimates of the
population by the component method have been done by S. G. Strumilin
(1877 -- 1974), A. Ya. Boyarsky (1906 -- 1985), P. P. Shusherin, and M.
S. Bedny, M. G. Nosova \cite{bibid28}.

It should be noted that the formulas for the
model of natural population movement in discrete time (matrix model --
Leslie matrix), also obtained by O. Staroverov \cite{bibid35, bibid36} are the
so-called moving ages, that is, the application of the Leslie approach
leads to the same results as the component method. And the Lotka model
is nothing but an analog of the component method in continuous time.

The general theory of a stable population in
the case of demographic processes with variable intensities, used by A.
Cole, A. Lopez, R. Leah, D. Ahlburg, Z. Sykes, Carter, M. Alho, B.
Spencer, J. Pollard, J. Cohen, N. Keyfitz, H. Caswell, M. Trevisan, L.
Goodman, and others, stand out here in a separate class of methods
-- a stochastic version of the component method \cite{bibid1, bibid2, bibid6, bibid9, bibid11, bibid18, bibid19,bibid21,bibid29,bibid34}.

Note that the component method inherently does
not take into account the influence of socioeconomic factors on
demographic processes. In addition, the main drawback of the method is
the impossibility of calculating the birth rate when designing forecast
values for the number of age groups.

To summarize, the analysis of many models
shows that in the modeling of demographic processes, deterministic
models (discrete and continuous) and stochastic discrete are most
common. The advantage of stochastic models over deterministic models is
that they take into account the deviation of the frequencies of
demographic events from their probabilities.

However, demographic processes proceed in
continuous time and are stochastic. Research methods for such processes
in mathematical demography are not sufficiently developed. That is why
the urgent task is to significantly expand
the mathematical models of the process of changing the demographic
situation, as well as the development of research methods.

\section{New Stochastic Model with
Continuous Time}

Models and methods of queuing theory are
widely used in mathematical modeling of systems in various fields, for
example, communications, transport, industry, economics, and military
affairs. The main elements of any queuing system are the incoming
stream of applications (or requirements), service devices, and the law
of service, which determine the length of time an application is on the
device during its service.

The article proposes to apply models and
methods of queuing theory to analyze the processes of changing the
demographic situation, taking into account the specifics of demographic
processes. We note that the terminology established in the queuing
theory (device, line, service, application, requirement) is weakly
associated with demographic concepts, while the terms flow and
intensity are quite acceptable here.

In this paper, consider a mathematical model
of human population growth as an autonomous non-Markov queuing system
with an unlimited number of servers and two types of applications. Such
a model is defined as the flow of applications in an autonomous
non-Markov queuing system with an unlimited number of devices and the
average number of occupied devices (Figure 1).

We define the process of functioning of the
queuing system in terms of queuing theory, and then give a demographic
interpretation of the main elements of this model.

\begin{figure}[h]
\centering
\includegraphics[width=8.493cm,height=6.165cm]{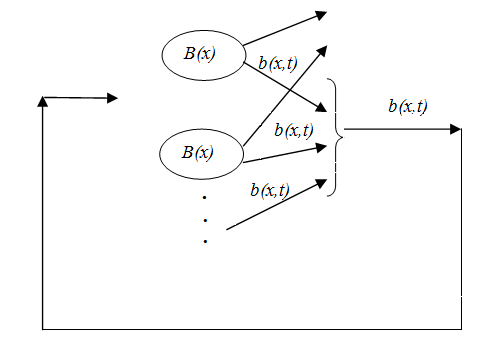}
\caption{Autonomous non-Markov queuing system
with an unlimited number of servers}
\label{figure: header text example}
\end{figure}

\newpage
The autonomous queuing system receives two types of applications: applications of the first type and applications of the second type. Define the process of servicing applications. Each application at the time of its receipt occupies a free device and is on it for the entire service time, the duration of which is random. Durations of servicing various requirements are stochastically independent, have the same distribution determined by the function \textit{S${}_{i}$}(\textit{x})=1-\textit{B${}_{i}$}(\textit{x}), where \textit{B${}_{1}$}(\textit{x}) and \textit{B${}_{2}$}(\textit{x}) are the distribution functions of the service time of applications of the first and second type, respectively. After completing the service, the application leaves the system.

For applications in the system, we define the age \textit{x}$\mathrm{\ge}$0 as the length of the interval from the time \textit{t}-\textit{x} of the beginning of its service (the moment of entry into the system) to the current time \textit{t}. Each application of the first type of age \textit{x} at time \textit{t} with intensity \textit{b}(\textit{x,t}) generates a new requirement, that is, the probability that the application of the first type of age \textit{x }from time \textit{t} for an infinitely small time interval of duration $\Delta$\textit{t} will generate a new requirement is \textit{b}(\textit{x,t})$\Delta$\textit{t + o}($\Delta$\textit{t}), and the probability of generating two or more requirements is an infinitely small quantity of a higher order than $\Delta$\textit{t}. A new application (of the first or second type) at the time of its appearance takes a free device and begins the process of its maintenance, generating the requirements of a new generation.

In the considered queuing system, there is no external source of applications, since all new applications are generated by applications that are being serviced. In this regard, the system is called autonomous. In the general case, it is assumed that the duration of the service time has a rather arbitrary distribution function; therefore, this service system is non-Markovian. The number of serviced applications is unlimited.

In terms of demography, the served application is interpreted as a female or male person, the application service time is the person's life expectancy, \textit{S${}_{1}$}(\textit{x}) and \textit{S${}_{2}$}(\textit{x}) are the survival function for women and men, respectively, the application age \textit{x} is the person's age at the considered time \textit{t}, function \textit{b}(\textit{x,t}) is the birth rate of women of age \textit{x }in year \textit{t} (fertility function). We assume that with a probability \textit{r} a girl is born and with a probability of (\textit{1}-\textit{r}) a boy. The incoming flow of applications is the process of the birth of children, that is, the sequence of moments of the birth of children from the entire population of women.

Note that in deterministic models with discrete-time, the age structure of the female population is considered as the aggregate of the number of the main five-year (or annual) demographic groups. In determinate models with continuous time, the age density of the female population is studied. In the proposed stochastic model with continuous time, the stochastic density of population and the probability distribution of its values are studied.

To determine the stochastic density, we denote \textit{N${}_{1}$}(\textit{x}${}_{1}$,\textit{x}${}_{2}$,\textit{t}) and \textit{N${}_{2}$}(\textit{x}${}_{1}$,\textit{x}${}_{2}$,\textit{t}) -- the number of applications of the first and second type with the age $x\in [x_1,x_2)$, served in this system at time \textit{t}. Here \textit{x} is any nonnegative real number. A limit 
\[\mathop{lim}_{\mathit{\Delta}x\to 0}\frac{1}{\mathit{\Delta}x}N_1(x,x+\mathit{\Delta}x,t)=\xi (x,t),\] 
\[\mathop{lim}_{\mathit{\Delta}x\to 0}\frac{1}{\mathit{\Delta}x}N_2\left(x,x+\mathit{\Delta}x,t\right)=\eta \left(x,t\right).\] 
will be called the stochastic densities $\xi$(\textit{x},\textit{t}) and $\eta$(\textit{x},\textit{t}) of the number of applications (female and male population) at age \textit{x} at time \textit{t}.

Obviously,
\[N_1\left(x_1,x_2,t\right)=\int^{x_2}_{x_1}{N_1\left(x,t\right)dx},\] 
\[N_2(x_1,x_2,t)=\int^{x_2}_{x_1}{N_2(x,t)dx}.\] 

  In this work, we will solve the problem of investigating the random functions $\xi$(\textit{x},\textit{t}) and $\eta$(\textit{x},\textit{t}). In demography the mathematical expectation of the random functions $\xi$(\textit{x},\textit{t}) and $\eta$(\textit{x},\textit{t}), there are
\[M\left\{\xi \left(x,t\right)\right\}=g\left(x,t\right),\] 
\[M\left\{\eta \left(x,t\right)\right\}=m\left(x,t\right).\] 
It is called a population density function at age \textit{x} at time \textit{t }\cite{bibid27}. The functions \textit{m}(\textit{x},\textit{t}) and \textit{g}(\textit{x},\textit{t}) define the average characteristics and they are the most important characteristics of the random functions $\xi$(\textit{x},\textit{t}) and $\eta$(\textit{x},\textit{t}). 

\section{Autonomous Non-Markov Queuing System with an Unlimited Number of Servers}

We perform an investigation of the random functions \textit{N${}_{1}$(x,t) }and \textit{N${}_{2}$(x,t) } according to the virtual phase method \cite{bibid27}. That is, we considerate an auxiliary autonomous queuing system with a PH-distribution of the service time (a phase-type distribution).

\subsection{A research of the auxiliary autonomous queuing system with a PH-distribution of the service time}

From the study of the autonomous queuing system defined by the Figure 1, we turn to the consideration of a system whose structure coincides with the original, and the service time $\tau$ of each application is composed of the durations of a random number of phases
\[\tau ={\tau }_1+{\tau }_2+...+{\tau }_{\nu },\] 
where $\tau$\textit{${}_{i}$} -- the duration of the\textit{ i}${}^{th}$ service phase. The values $\tau$\textit{${}_{i}$} are independent exponentially distributed random variables with the parameter $\mu$${}_{1\ }$for applications of the first type and with parameter $\mu$${}_{2\ }$for applications of the second type, where \textit{i}=1,2{\dots}\textit{v}. Here \textit{v} is a random variable and \textit{v}=1,2{\dots}. This system is called an autonomous system with phase distribution or PH distribution of service time.

Let us explain the service process using the example of applications of the first type. Service for each new application of the first type begins in the first phase. The application, having completed the service at the \textit{i}${}^{th}$ phase, with probability \textit{q${}_{i}$} proceeds to the service at the \textit{i} + 1${}^{th}$ phase, and with the probability 1-\textit{q${}_{i}$} completes the full service and leaves the system. 

We determine the transition probability of application to the next phase as
\begin{equation} \label{GrindEQ__1_} 
q_i=S_1\left(\frac{i}{{\mu }_1}\right)/S_1\left(\frac{i-1}{{\mu }_1}\right), 
\end{equation} 
we believe that \textit{S${}_{1}$}(0)=1-\textit{B${}_{1}$}(0)=1. So, we can write by the multiplication theorem on probability 
\[P\left(v=1\right)=P\left(v=1/v-1=0\right)P\left(v-1=0\right)=1-q_1,\] 
\[ P\left(v=1\right)=P\left(v=1/v-1=0\right)P\left(v-1=0\right)=1-q_1,\] 
\[P\left(v=2\right)=P\left(v=2/v-1=1\right)P\left(v-1=1\right)=r_1(1-q_2),\] 
\[ \cdots\] 
\[P\left(v=n\right)=P\left(v=n/v-1=n-1\right)P\left(v-1=n-1\right)=q_1q_2\dots q_{n-1}(1-q_n).\] 

According to this expression and \eqref{GrindEQ__1_} it follows 
\begin{equation} \label{GrindEQ__2_} 
P(\nu =n)=S_1\left(\frac{n-1}{{\mu }_1}\right)-S_1\left(\frac{n}{{\mu }_1}\right), 
\end{equation} 

where \textit{P}(\textit{v}=\textit{n}) is the probability that the application service will be completed in \textit{n} phase in the system. Since the service duration $\tau$ is a random non-negative variable and distributed exponentially with the parameter ${\mu }_1$, the characteristic function of the random variable $\tau$ has the form \textit{w(}$\alpha$)=\textit{Me}${}^{-}$${}^{\mathrm{\alpha }\mathrm{\tau }}$. We write by the formula of total probability for mathematical expectations and according to \eqref{GrindEQ__2_}
\[w(\alpha )=Me^{-\alpha \tau }=\sum^{\infty }_{n=1}{M(}e^{-\alpha \tau }/\nu =n)P(\nu =n)=\] 
\[={\sum^{\infty }_{n=1}{\left(\frac{{\mu }_1}{{\mu }_1+\alpha }\right)}}^n\left\{S_1\left(\frac{n-1}{{\mu }_1}\right)-S_1\left(\frac{n}{{\mu }_1}\right)\right\}.\] 
For ${\mu }_1$$\mathrm{\to}$$\mathrm{\infty}$, we rewrite this equation in the form
\[\mathop{lim}_{\mu \to \infty }Me^{-\alpha \tau }=\mathop{lim}_{\mu \to \infty }{\sum^{\infty }_{n=1}{\left[{\left(1-\frac{\alpha }{{\mu }_1+\alpha }\right)}^{{\mu }_1}\right]}}^{\frac{n}{{\mu }_1}}\left\{S_1\left(\frac{n-1}{{\mu }_1}\right)-S_1\left(\frac{n}{{\mu }_1}\right)\right\}=\int^{\infty }_0{e^{-\alpha x}dB_1(x)}.\] 

Thus, for ${\mu }_1$$\mathrm{\to}$$\mathrm{\infty}$, the duration of the service with the PH-distribution, by virtue of the choice of \eqref{GrindEQ__1_} probability values \textit{q${}_{i}$}, converges to the service time determined by the distribution function \textit{B${}_{1}$}(\textit{x})=1-\textit{S${}_{1}$}(\textit{x}) of the original autonomous queuing system.

Assuming that the application of the second type, having completed the service in the \textit{i}${}^{th}$ phase, with probability \textit{s${}_{i}$} goes to the service in the \textit{i} + 1${}^{st}$ phase, and with the probability 1-\textit{ s${}_{i\ }$}it completes the full service and leaves the system, it is not difficult to carry out similar reasoning. 

The method of approximating the application service time by the sum of a random number of independent and equally exponentially distributed random variables and the limit transition with an unlimited increase in the number of phases and a proportional decrease in the duration of each phase was called in \cite{bibid27} the virtual phase method.

We will assume that each application of the first type served at the\textit{ i}${}^{th}$ phase at time\textit{ t }with intensity \textit{b${}_{i}$(t)=b(i/$\mu$${}_{1}$,t)} generates a new application. We denote \textit{n}(\textit{i},\textit{t}) -- the number of applications of the first type and \textit{l}(\textit{i},\textit{t}) -- the number of applications of the second type, served at the appropriate\textit{ i}${}^{th}$ phase at time\textit{ t }. Then a random process
\[\overline{n}(t)=\{n\left(1,t\right),n\left(2,t\right),\dots ,\ l\left(1,t\right),l\left(2,t\right),\dots {\}}^T\] 
is multidimensional the continuous-time Markov chain. Its probability distribution is   
\[P(n_1,n_2,...l_1,l_2,\dots ,t)=P\left\{n_1\left(t\right)=n_1,...,l_1\left(t\right)=l_1,l_2\left(t\right)=l_2,\dots \right\}.\] 
We write the system of Kolmogorov differential equations  
\[\frac{\partial }{\partial t}\left\{P\left(n_1,n_2,\dots l_1,l_2,\dots ,t\right)\right\}=-P(n_1,n_2,...l_1,l_2,\dots ,t)\left\{\sum^{\infty }_{i=1}{n_i}\left({\mu }_1+b_i\left(t\right)\right)+l_i{\mu }_2\right\}+\] 
\[+P(n_1-1,n_2,...,l_1,l_2,\dots ,t)r\left\{(n_1-1)b_1\left(t\right)+\right.\sum^{\infty }_{i=2}{n_i}\left.b_i\left(t\right)\right\}+\] 
\[+P\left(n_1,n_2,\dots ,l_1-1,l_2,\dots ,t\right)\left(1-r\right)\sum^{\infty }_{i=2}{n_i}b_i\left(t\right)+\]                                           
\begin{equation} \label{GrindEQ__3_} 
+{\mu }_1\sum^{\infty }_{i=1}{(n_i+1)\{P(n_1,n_2,\dots ,n_i+1,}n_{i+1},\dots ,t)(1-q_i)+ 
\end{equation} 
\[+P(n_1,n_2,\dots ,n_i+1,n_{i+1}-1,n_{i+2}\dots ,t)q_i\}+\]                                     
\[+{\mu }_2\sum^{\infty }_{i=1}{(l_i+1)\{P(n_1,\dots ,l_i+1,}l_{i+1},\dots ,t)(1-s_i) +\] 
\[+P(n_1,\dots ,l_i+1,l_{i+1}-1,l_{i+2}\dots ,t)s_i\}.\] 
We denote a characteristic function of the number of occupied servers at time \textit{t} in the form
\[H\left(y,t\right)=M\left\{{exp \left(j\sum^{\infty }_{i=1}{(u_in_i(t)}+z_il_i\left(t\right))\right)\ }\right\} =\] 
\[=\sum_{n_1,n_2,\dots ,{,l}_1,l_2,\dots }{P\left(n_1,n_2,\dots {,l}_1,l_2,\dots ,t\right){exp \left\{j\sum^{\infty }_{i=1}{{(u}_in_i\left(t\right)+z_il_i\left(t\right))}\right\}\ }},\] 
 where $j=\sqrt{-1}$\textit{ -- }the imaginary unit\textit{.}

The characteristic function\textit{ H}(y,\textit{t}) is a multidimensional function of the vector argument \textit{y}=$\mathrm{\{}$\textit{u}${}_{1}$,\textit{u}${}_{2}$,{\dots},\textit{ z${}_{1}$, z${}_{2}$,{\dots}$\}$${}^{T}$} and the scalar argument \textit{t}. Multiply \eqref{GrindEQ__3_} by ${exp \left(j\sum^{\infty }_{i=1}{(u_in_i(t)}+z_il_i\left(t\right))\right)\ }$ and sum. From system \eqref{GrindEQ__3_} we obtain the equality
\[\frac{\partial H\left(y,t\right)}{\partial t}=j\sum^{\infty }_{i=1}{\frac{\partial H\left(y,t\right)}{\partial u_i}\Bigg\{ \left(1-re^{ju_1}\right)b_i\left(t\right)-\left(1-r\right){b_i\left(t\right)e}^{jz_1}+}\] 
\begin{equation} \label{GrindEQ__4_} 
+\left(1-e^{-ju_i}\right)\Bigg.{\mu }_1+e^{-ju_i}\left(1-e^{ju_{i-1}}\right){\mu }_1q_i\Bigg\}+ 
\end{equation} 
\[+j\sum^{\infty }_{i=1}{\frac{\partial H\left(y,t\right)}{\partial z_i}\left\{\bigg(1-e^{-jz_i}\right){\mu }_2-\bigg.}e^{-jz_i}\left(1-e^{jz_{i-1}}\right)\bigg.{\mu }_2s_i\bigg\},\] 
where $b_i(t)=b\left(\frac{i}{{\mu }_1},t\right)$, $q_i=S_1\left(\frac{i}{{\mu }_1}\right)/S_1\left(\frac{i-1}{{\mu }_1}\right)$,$\ s_i=S_2\left(\frac{i}{{\mu }_2}\right)/S_2\left(\frac{i-1}{{\mu }_2}\right).$

The solution \textit{H(y,t)} of \eqref{GrindEQ__4_} determines the problem of researching the autonomous system with the PH-distribution of the service time.

We denote
\[u_i=u(i/{\mu }_1),\ n_i(t)=n(i/{\mu }_1,t),\] 
\[z_i=u(i/{\mu }_2),\ l_i(t)=l(i/{\mu }_2,t),\] 
and the characteristic function \textit{H}(y,\textit{t}) is written as
\[H\left(y,t\right)=M\left\{\mathrm{exp}\mathrm{}\left[j\sum^{\infty }_{i=1}{\left(\frac{1}{{\mu }_1}u\left(\frac{i}{{\mu }_1}\right){\mu }_1n\left(\frac{i}{{\mu }_1},t\right)+\frac{1}{{\mu }_2}z\left(\frac{i}{{\mu }_2}\right){\mu }_2l\left(\frac{i}{{\mu }_2},t\right)\right)}\right]\right\}.\] 
Let an expression \textit{i}/$\mu$${}_{1}$$\mathrm{\to}$\textit{x }and\textit{ i}/$\mu$${}_{2}$$\mathrm{\to}$\textit{x }for $\mu$${}_{1}$$\mathrm{\to}$$\mathrm{\infty}$, $\mu$${}_{2}$$\mathrm{\to}$$\mathrm{\infty}$, \textit{i}$\mathrm{\to}$$\mathrm{\infty}$, then we assume that the following limits exist 
\[{\mathop{\mathrm{lim}}_{i/{\mu }_1\to x} u(i/{\mu }_1)\ }=u\left(x\right),\ {\mathop{\mathrm{lim}}_{i/{\mu }_1\to x} {\mu }_1n(i/{\mu }_1,t)\ }=\zeta \left(x,t\right),\] 
\[{\mathop{\mathrm{lim}}_{i/{\mu }_2\to x} z(i/{\mu }_2)\ }=z\left(x\right),\ {\mathop{\mathrm{lim}}_{i/{\mu }_2\to x} {\mu }_2l(i/{\mu }_2,t)\ }=\eta \left(x,t\right),\] 
\[{\mathop{\mathrm{lim}}_{ \begin{array}{c}
i/{\mu }_1\to x \\ 
i/{\mu }_2\to x \end{array}
} H(u,z,t)\ }=M\left\{\mathrm{exp}\mathrm{}\left[j\int^{\infty }_0{\left(u\left(x\right)\zeta \left(x,t\right)+z\left(x\right)\eta \left(x,t\right)\right)}dx\right]\right\}=F\left(y,t\right).\] 

The function \textit{F}(y,\textit{t}) is called the characteristic functional of the random functions $\xi$(\textit{x},\textit{t}) and $\eta$(\textit{x},\textit{t}) of two arguments \textit{x} and \textit{t}. The random function $\xi$(\textit{x},\textit{t}) is called the stochastic density of the number of applications of the first type of age \textit{x} served in the system at time \textit{t}, for all \textit{x}$\mathrm{\ge}$0, and the random function $\eta$(\textit{x},\textit{t}) is called the stochastic density of the number of applications of the second type of age \textit{x} served in the system at time \textit{t}, for all \textit{x}$\mathrm{\ge}$0.

With this in mind and for \textit{i}/$\mu$${}_{1}$$\mathrm{\to}$\textit{x, i}/$\mu$${}_{2}$$\mathrm{\to}$\textit{x,} $\mu$${}_{1}$$\mathrm{\to}$$\mathrm{\infty}$, $\mu$${}_{2}$$\mathrm{\to}$$\mathrm{\infty}$, we rewrite equation \eqref{GrindEQ__4_} in the form
\[\frac{\partial F\left(y,t\right)}{\partial t}=j\int^{\infty }_0{\frac{\partial F\left(y,t\right)}{\partial u\left(x\right)}}\Bigg\{\left(1-re^{ju\left(0\right)}\right)b\left(x,t\right)-\left(1-r\right)e^{jz\left(0\right)}b\left(x,t\right)\Bigg. +\] 
\begin{equation} \label{GrindEQ__5_} 
+\left(e^{-ju\left(x\right)}-1\right)\frac{S '_1\left(x\right)}{S_1\left(x\right)}-\Bigg.ju '\left(x\right)\Bigg\}+   
\end{equation} 
\[+j\int^{\infty }_0{\frac{\partial F\left(y,t\right)}{\partial z\left(x\right)}}\left\{\left(e^{-jz\left(x\right)}-1\right)\frac{S_2 '\left(x\right)}{S_2\left(x\right)}-jz '\left(x\right)\right\}dx.\] 

The equation \eqref{GrindEQ__5_} is called the basic equation for the autonomous non-Markov queuing system with an unlimited number of devices.

We will solve the equation \eqref{GrindEQ__5_} will be solved by the modified method of asymptotic analysis \cite{bibid27}, assuming that the random functions $\xi$(\textit{x},\textit{t}) and $\eta$(\textit{x},\textit{t}) take sufficiently large values proportional to some infinitely large value of \textit{N, }that is \textit{N} $\mathrm{\to}$$\mathrm{\infty}$. We find an asymptotic of the first and second orders of its solution for equation \eqref{GrindEQ__5_}.

\subsection{ Foundation of the probabilistic characteristics by the method of asymptotic analysis}

In queuing theory, the method of asymptotic analysis is the study of equations, determined any characteristics of a system, when an asymptotic condition is satisfied. The form of an asymptotic condition is specified in accordance with the model and the research problem [27]. 

We consider equation \eqref{GrindEQ__5_} for the characteristic functional \textit{F(y,t)} in the asymptotic condition of a large number of groups that are proportional to an infinitely large value of \textit{N}. In the asymptotic condition of a large number of groups, we find the characteristic function of the number of applications $\overline{n}(t)$, serviced at time \textit{t}.

\subsubsection{The first order asymptotic}

Denote $\varepsilon$=1/\textit{N}, where $\varepsilon$ -- a small positive parameter, in the equation \eqref{GrindEQ__5_} we replace 
\[u\left(x\right)=\varepsilon w_1\left(x\right),z\left(x\right)=\varepsilon w_2\left(x\right),F\left(y,t\right)=F_1\left(w_1,w_2,t,\varepsilon \right),\] 
and we obtain an equality
\[\frac{\partial F_1(w_1,w_2,t,\varepsilon )}{\partial t}=j\frac{1}{\varepsilon }\int^{\infty }_0{\frac{\partial F_1(w_1,w_2,t,\varepsilon )}{\partial w_1(x)}}\Bigg\{\left(1-re^{j\varepsilon w_1\left(0\right)}\right)b\left(x,t\right)\Bigg. -\] 
\begin{equation} \label{GrindEQ__6_} 
-\left(1-r\right)e^{j{\varepsilon w}_2\left(0\right)}b\left(x,t\right)+\left(e^{-j{\varepsilon w}_1\left(x\right)}-1\right)\frac{S_1 '\left(x\right)}{S_1\left(x\right)}-\Bigg.j\varepsilon {w_1} '\left(x\right)\Bigg\}+                     
\end{equation} 
\[+j\frac{1}{\varepsilon }\int^{\infty }_0{\frac{\partial F_1(w_1,w_2,t,\varepsilon )}{\partial w_2(x)}}\left\{\left(e^{-j{\varepsilon w}_2\left(x\right)}-1\right)\frac{S_2 '\left(x\right)}{S_2\left(x\right)}-j\varepsilon {w_2} '\left(x\right)\right\}dx.\] 
This equality \eqref{GrindEQ__6_} is singularly perturbed equation. 

We consider a subclass of solutions \textit{F(y},\textit{t}) of equation \eqref{GrindEQ__5_}, for which, due to replacements, the function $F_1\left(w_1,w_2,t,\varepsilon \right)\ $has the following properties.

There is a finite limit at $\varepsilon$$\mathrm{\to}$0 for $F_1\left(w_1,w_2,t,\varepsilon \right)$
\begin{equation} \label{GrindEQ__7_} 
{\mathop{\mathrm{lim}}_{\varepsilon \to 0} F_1\left(w_1,w_2,t,\varepsilon \right)=F_1\left(w_1,w_2,t\right),\ } 
\end{equation} 
and its derivative with respect \textit{t}, $w_1\left(x\right)$ and $w_2\left(x\right)$
\begin{equation} \label{GrindEQ__8_} 
{\mathop{\mathrm{lim}}_{\varepsilon \to 0} \frac{{\partial F}_1\left(w_1,w_2,t,\varepsilon \right)}{\partial t}=\frac{F_1\left(w_1,w_2,t\right)}{\partial t},\ } 
\end{equation} 
\begin{equation} \label{GrindEQ__9_} 
{\mathop{\mathrm{lim}}_{\varepsilon \to 0} \frac{{\partial F}_1\left(w_1,w_2,t,\varepsilon \right)}{\partial w_1(x)}=\frac{F_1\left(w_1,w_2,t\right)}{\partial w_1(x)}\ } ,                                                    
\end{equation} 
\begin{equation} \label{GrindEQ__10_} 
{\mathop{\mathrm{lim}}_{\varepsilon \to 0} \frac{{\partial F}_1\left(w_1,w_2,t,\varepsilon \right)}{\partial w_2(x)}=\frac{F_1\left(w_1,w_2,t\right)}{\partial w_2(x)}\ } .                                                  
\end{equation} 
We prove the following theorem.

\begin{theorem}
\label{thm1}
For $\varepsilon$$\mathrm{\to}$0 the solution of problem \eqref{GrindEQ__6_} has the form
\begin{equation} \label{GrindEQ__11_} 
F_1\left(w_1,w_2,t\right)=exp\left\{j\int^{\infty }_0{(w_1\left(x\right)g\left(x,t\right)+w_2\left(x\right)m\left(x,t\right))dx}\right\},\  
\end{equation} 
where \textit{g}(\textit{x},\textit{t}) and \textit{m}(\textit{x},\textit{t}) are solutions of equation 
\begin{equation} \label{GrindEQ__12_} 
\frac{\partial g\left(x,t\right)\ \ \ }{\partial t}+\frac{\partial g\left(x,t\right)\ \ \ }{\partial x}\ =g\left(x,t\right)\frac{S_1 '(x)}{S_1(x)}\ ,                                            
\end{equation} 
\begin{equation} \label{GrindEQ__13_} 
\frac{\partial m\left(x,t\right)\ \ \ }{\partial t}+\frac{\partial m\left(x,t\right)\ \ \ }{\partial x}\ =m\left(x,t\right)\frac{S_2 '(x)}{S_2(x)},                                          
\end{equation} 
with the boundary conditions
\[g\left(0,x\right)=r\int^{\infty }_0{g\left(x,t\right)}b\left(x,t\right)dx,\] 
\[m\left(0,x\right)=\left(1-r\right)\int^{\infty }_0{g\left(x,t\right)}b\left(x,t\right)dx.\] 
\end{theorem}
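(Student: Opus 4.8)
The plan is to substitute the proposed form \eqref{GrindEQ__11_} into the singularly perturbed equation \eqref{GrindEQ__6_}, expand all exponentials in powers of $\varepsilon$, collect the leading-order terms, and show that the resulting identity is satisfied precisely when $g$ and $m$ solve the transport equations \eqref{GrindEQ__12_}--\eqref{GrindEQ__13_} with the stated boundary conditions. First I would expand each factor $e^{j\varepsilon w_k(\cdot)}$ in \eqref{GrindEQ__6_} to first order, e.g. $1-re^{j\varepsilon w_1(0)} = (1-r) - jr\varepsilon w_1(0) + o(\varepsilon)$, $e^{-j\varepsilon w_1(x)} - 1 = -j\varepsilon w_1(x) + o(\varepsilon)$, so that the curly braces in \eqref{GrindEQ__6_}, which are multiplied by $1/\varepsilon$, become $\varepsilon\bigl[\,-jw_1(0)b(x,t) - j(1-r)w_2(0)\cdots\bigr]$ — wait, more carefully: the $O(1)$ pieces of the two brace expressions must cancel, and the surviving $O(\varepsilon)$ pieces, after division by $\varepsilon$, give a finite limit. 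I would check that the $O(1)$ cancellation indeed holds (the $(1-r)b(x,t)$ from the first brace cancels against $(1-r)b(x,t)$ from the second, and there is no $O(1)$ term in the $\mu$-phase contributions after the virtual-phase limit has been taken in passing from \eqref{GrindEQ__4_} to \eqref{GrindEQ__5_}).

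Next I would compute the functional derivatives of the ansatz: from \eqref{GrindEQ__11_} one has $\partial F_1/\partial w_1(x) = j\,g(x,t)\,F_1$ and $\partial F_1/\partial w_2(x) = j\,m(x,t)\,F_1$, while $\partial F_1/\partial t = j\int_0^\infty\bigl(w_1(x)\,\partial_t g(x,t) + w_2(x)\,\partial_t m(x,t)\bigr)dx\;F_1$. Substituting these and the $\varepsilon$-expansions into \eqref{GrindEQ__6_}, dividing through by $F_1$, and keeping the limit $\varepsilon\to0$ (using \eqref{GrindEQ__7_}--\eqref{GrindEQ__10_} to justify passing to the limit under the integral), I get an identity of the form
\[
\int_0^\infty\!\bigl(w_1\partial_t g + w_2\partial_t m\bigr)dx
= \int_0^\infty g\,\bigl[-\,w_1(0)\,b(x,t) - (1-r)\,w_2(0)\,b(x,t) + \cdots\bigr]dx + \cdots,
\]
where the $\cdots$ on the right collect the terms $-w_1(x)\tfrac{S_1'(x)}{S_1(x)}$ and $-w_1'(x)$ (and the $w_2$-analogues). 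The terms involving $w_1'(x)$ and $w_2'(x)$ I would integrate by parts, $\int_0^\infty g\,w_1'(x)\,dx = -g(0,t)w_1(0) - \int_0^\infty w_1(x)\,\partial_x g(x,t)\,dx$ (assuming the usual decay of $g$ and $w_1$ at infinity), which is what converts the spatial-derivative term into $\partial_x g$ in \eqref{GrindEQ__12_} and simultaneously throws off a boundary term at $x=0$.

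The final step is to match coefficients. Since $w_1$ and $w_2$ are arbitrary test functions, the coefficient of $w_1(x)$ under the integral must vanish identically, giving $\partial_t g + \partial_x g = g\,S_1'(x)/S_1(x)$, i.e. \eqref{GrindEQ__12_}, and similarly \eqref{GrindEQ__13_} for $m$; the coefficients of the pointwise values $w_1(0)$ and $w_2(0)$ arising from the $e^{j\varepsilon w_1(0)}$, $e^{j\varepsilon w_2(0)}$ factors together with the boundary terms from the integration by parts must vanish, which yields $g(0,t) = r\int_0^\infty g(x,t)b(x,t)\,dx$ and $m(0,t) = (1-r)\int_0^\infty g(x,t)b(x,t)\,dx$ — the stated boundary conditions. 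I expect the main obstacle to be bookkeeping rather than anything deep: correctly separating the "bulk" contribution (coefficient of $w_k(x)$ for $x>0$) from the "boundary" contribution (coefficient of $w_k(0)$), since both the newborn-generation terms $e^{j\varepsilon w_1(0)}b(x,t)$ and the integration-by-parts of the $w_1'(x)$ term feed into the $x=0$ balance, and one must be careful that the test function $w_1$ and its value $w_1(0)$ are treated as independent pieces of data when equating coefficients. A secondary point requiring care is the rigorous justification of the termwise $\varepsilon\to0$ limit inside the infinite sum/integral, which is exactly what the regularity hypotheses \eqref{GrindEQ__7_}--\eqref{GrindEQ__10_} are imposed to provide.
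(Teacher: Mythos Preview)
Your proposal is correct and follows essentially the same route as the paper: expand the exponentials in \eqref{GrindEQ__6_} to first order in $\varepsilon$, pass to the limit using \eqref{GrindEQ__7_}--\eqref{GrindEQ__10_} to obtain a linear first-order functional PDE for $F_1$, insert the exponential ansatz \eqref{GrindEQ__11_}, and then match the coefficients of $w_1(x)$, $w_2(x)$, $w_1(0)$, $w_2(0)$ to read off \eqref{GrindEQ__12_}--\eqref{GrindEQ__13_} and the boundary conditions. The only cosmetic difference is that the paper first takes the $\varepsilon\to0$ limit to record the limiting equation (their equation \eqref{GrindEQ__14_}) and only then substitutes the ansatz, whereas you substitute first and take the limit afterward; your explicit integration-by-parts of the $w_k'(x)$ terms is precisely the step the paper suppresses when it says ``equating terms for the same $w_1(0)$, $w_2(0)$, $w_1(x)$, $w_2(x)$''.
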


\begin{proof}
In equality\textbf{\textit{ }}\eqref{GrindEQ__6_} the exponential is expanded in series up to \textit{o}($\varepsilon$${}^{2}$). In view of (7-10), we make the passage to the limit for $\varepsilon$$\mathrm{\to}$0. We obtain that the functional \textit{F}${}_{1}$(\textit{w},\textit{t}) is the solution of a first-order linear homogeneous partial differential equation
\[\frac{\partial F_1(w_1,w_2,t)}{\partial t}=\int^{\infty }_0{\frac{\partial F_1(w_1,w_2,t)}{\partial w_1(x)}}\Bigg\{rw_1\left(0\right)b\left(x,t\right)\Bigg.+\] 
\begin{equation} \label{GrindEQ__14_} 
+\left(1-r\right)w_2\left(0\right)b\left(x,t\right)+w_1\left(x\right)\frac{S_1 '\left(x\right)}{S_1\left(x\right)}+\Bigg.{w_1} '\left(x\right)\Bigg\}dx+                         
\end{equation} 
\[+\int^{\infty }_0{\frac{\partial F_1(w_1,w_2,t)}{\partial w_2(x)}}\left\{w_2\left(x\right)\frac{S_2 '\left(x\right)}{S_2\left(x\right)}+{w_2} '\left(x\right)\right\}dx.\] 
 The solution of this equation is found in the form of the characteristic functional
\begin{equation} \label{GrindEQ__15_} 
F_1\left(w_1,w_2,t\right)=exp\left\{j\int^{\infty }_0{\left(w_1\left(x\right)g\left(x,t\right)+w_2\left(x\right)m\left(x,t\right)\right)dx}\right\}, 
\end{equation} 
where function\textit{ g}(\textit{x},\textit{t}) is the mean stochastic age-specific density $\xi$(\textit{x},\textit{t}) of the asymptotic number of application of the first type, serviced in the system at the moment \textit{t}, and function\textit{ m}(\textit{x},\textit{t}) is the mean stochastic age-specific density $\eta$(\textit{x},\textit{t}) of the asymptotic number of application of the second type, serviced in the system at the moment \textit{t},  $j=\sqrt{-1}\ $-- the imaginary unit. Substitute the solution \eqref{GrindEQ__15_} in equation \eqref{GrindEQ__14_}
\[\int^{\infty }_0{w_1\left(x\right)\frac{\partial g\left(x,t\right)}{\partial t}}dx+\int^{\infty }_0{w_2\left(x\right)\frac{\partial m\left(x,t\right)}{\partial t}dx}=r\int^{\infty }_0{g\left(x,t\right)w_1\left(0\right)}b\left(x,t\right)dx+\] 
\[+\int^{\infty }_0{g\left(x,t\right)w_1\left(x\right)}\frac{S_1 '\left(x\right)}{S_1\left(x\right)}dx+(1-r)\int^{\infty }_0{g\left(x,t\right)w_2\left(0\right)}b\left(x,t\right)dx+\int^{\infty }_0{m\left(x,t\right)w_2\left(x\right)}\frac{S_2 '\left(x\right)}{S_2\left(x\right)}dx-\] 
\[-w_1\left(0\right)g\left(x,t\right)-w_2\left(0\right)m\left(x,t\right).\] 

Equating terms for the same \textit{w${}_{1}$}(0), \textit{w${}_{2}$}(0), and \textit{w${}_{1}$}(\textit{x}), \textit{w${}_{2}$}(\textit{x}), we obtain the equation that coincides with \eqref{GrindEQ__14_} and the corresponding boundary condition. The theorem is proved.
\end{proof} 

By virtue of \eqref{GrindEQ__15_}, we can write the asymptotic equality for $\varepsilon$$\mathrm{\to}$0 
\[F\left(y,t\right)\mathrm{=}F_{\mathrm{1}}\left(w_{\mathrm{1}},w_{\mathrm{2}},t\right)\mathrm{+}O\left(\varepsilon \right)\mathrm{=}exp\left\{jN\int^{\mathrm{\infty }}_0{\mathrm{(u}\left(x\right)g\left(x,t\right)\mathrm{+z}\left(x\right)m\left(x,t\right)\mathrm{)}dx}\right\}\mathrm{+}O\left(\varepsilon \right).\] 

\begin{definition}
A function
\[\widetilde{F_1}(y,t)=exp\left\{jN\int^{\mathrm{\infty }}_0{\mathrm{(u}\left(x\right)g\left(x,t\right)\mathrm{+z}\left(x\right)m\left(x,t\right)\mathrm{)}dx}\right\}\] 
is called the first order asymptotic of the characteristic functional\textit{ F}(y,\textit{t}).
\end{definition}

In terms of demography, equation (12-13) together with the boundary conditions determines the function \textit{Ng}(\textit{x},\textit{t}) and \textit{Nm}(\textit{x},\textit{t}). \textit{Ng}(\textit{x},\textit{t}) and \textit{Nm}(\textit{x},\textit{t}) are an average value of the stochastic population density of women and men respectively.

The analytic solution of the partial differential equations \eqref{GrindEQ__12_} and \eqref{GrindEQ__13_} are founded by the composition and solution of the system, determined the characteristics. We write down the solution of \eqref{GrindEQ__12_}
\[g\left(x,t\right)\ =S_1\left(x\right)\frac{g\left(x-t,0\right)}{S_1\left(x-t\right)}\ ,for\ x\ge t,\ x\ge 0,\ t\ge 0,\] 
where \textit{g}(\textit{x},0) is the known initial conditions,
\[g\left(x,t\right)=S_1\left(x\right)\varphi \left(t-x\right),\ for\ x<t,\ x\ge 0,\ t\ge 0,\] 
where $\varphi$(\textit{t}) is the solution of Voltaire integral equation of the second kind 
\[\varphi \left(t\right)=r\int^t_0{S_1\left(y\right)\varphi \left(t-y\right)b\left(y,t\right)dy+r\int^{\infty }_t{S_1\left(y\right)\frac{g\left(y-t,0\right)}{S_1\left(y-t\right)}b\left(y,t\right)dy}}.\]

Similarly, we write the solution of \eqref{GrindEQ__13_}
\[m\left(x,t\right)\ =S_2\left(x\right)\frac{m\left(x-t,0\right)}{S_2\left(x-t\right)}\ ,for\ x\ge t,\ x\ge 0,\ t\ge 0,\] 
where \textit{m}(\textit{x},0) is the known initial conditions,
\[m\left(x,t\right)=S_2\left(x\right)\phi \left(t-x\right),\ for\ x<t,\ x\ge 0,\ t\ge 0,\] 
where $\phi $(\textit{t}) is the solution of Voltaire integral equation of the second kind
\[\phi \left(t\right)=(1-r)\int^t_0{S_1\left(y\right)\varphi \left(t-y\right)b\left(y,t\right)dy+}(1-r)\int^{\infty }_t{S_1\left(y\right)\frac{g\left(y-t,0\right)}{S_1\left(y-t\right)}b\left(y,t\right)dy}.\] 

\subsubsection{The second order asymptotic}

We note that the first-order asymptotics determines only the average values of the stochastic densities $\xi$(\textit{x},\textit{t}) and $\eta$(\textit{x},\textit{t}). Therefore, naturally, the need arises to find second-order asymptotics, which allows one to obtain more detailed characteristics.

In order to find the second order asymptotic, we carry out some transformations in the basic equation \eqref{GrindEQ__5_}.

 In equation \eqref{GrindEQ__11_} by virtue of substitution \textit{u}=$\varepsilon$\textit{w${}_{1}$}, \textit{z}=$\varepsilon$\textit{w${}_{2}$}, where $\varepsilon$=1/\textit{N,} we return to the function \textit{u(x)} and \textit{z(x)}, obtain
\[F_1\left(w_1,w_2,t\right)=exp\left\{j\frac{1}{\varepsilon }\int^{\infty }_0{\left({\varepsilon w}_1\left(x\right)g\left(x,t\right)+{\varepsilon w}_2\left(x\right)m\left(x,t\right)\right)dx}\right\}=\] 
\[=exp\left\{jN\int^{\infty }_0{(u\left(x\right)g\left(x,t\right)++z\left(x\right)m\left(x,t\right))dx}\right\}.\] 
In equation \eqref{GrindEQ__5_} we replace
\begin{equation} \label{GrindEQ__16_} 
F\left(y,t\right)\mathrm{=}H_2\left(\mathrm{y,}t\right)exp\left\{jN\int^{\mathrm{\infty }}_0{\mathrm{(u}\left(x\right)g\left(x,t\right)\mathrm{+z}\left(x\right)m\left(x,t\right)\mathrm{)}dx}\right\},                 
\end{equation} 
and obtain
\[\frac{\partial H_2\left(y,t\right)}{\partial t}=j\int^{\infty }_0{\frac{\partial H_2\left(y,t\right)}{\partial u\left(x\right)}}\Bigg\{\left(1-re^{ju\left(0\right)}\right)b\left(x,t\right)\Bigg.-\] 
\[-\left(1-r\right)e^{jz\left(0\right)}b\left(x,t\right)+\left(e^{-ju\left(x\right)}-1\right)\frac{S_1 '\left(x\right)}{S_1\left(x\right)}-\Bigg.ju '\left(x\right)\Bigg\}dx -\] 
\[- NH_2\left(y,t\right)\{j\int^{\infty }_0{u\left(x\right)\frac{\partial g\left(x,t\right)}{\partial t}dx+\int^{\infty }_0{g\left(x,t\right)}}\Bigg\{\left(1-re^{ju\left(0\right)}\right)b\left(x,t\right)\Bigg.\ -\] 
\begin{equation} \label{GrindEQ__17_} 
-\left(1-r\right)e^{jz\left(0\right)}b\left(x,t\right)+\left(e^{-ju\left(x\right)}-1\right)\frac{S_1 '\left(x\right)}{S_1\left(x\right)}-\Bigg.ju '\left(x\right)\Bigg\}\Bigg.dx\Bigg\} +                   
\end{equation} 
\[+j\int^{\infty }_0{\frac{\partial H_2(y,t)}{\partial z(x)}}\Bigg\{\left(e^{-jz\left(x\right)}-1\right)\frac{S_2 '\left(x\right)}{S_2\left(x\right)}-jz '\left(x\right)\Bigg\}dx -\] 
\[-NH_2\left(y,t\right)\{j\int^{\infty }_0{z\left(x\right)\frac{\partial m\left(x,t\right)}{\partial t}dx+\int^{\infty }_0{m\left(x,t\right)}}\{\left(e^{-jz\left(x\right)}-1\right)\frac{S_2 '\left(x\right)}{S_2\left(x\right)} -\] 
\[-\Bigg.jz '\left(x\right)\Bigg\}\Bigg.dx\Bigg\}.\] 
From a prior designation and \eqref{GrindEQ__16_} follow, that \textit{H}${}_{2}$(\textit{y},\textit{t}) is the characteristic functional for the quantity ($\xi$(\textit{x},\textit{t})+$\eta$(\textit{x},\textit{t})) --\textit{N(g(x,t)+m}(\textit{x},\textit{t})). The mathematical expectation of $\mathrm{\{}$($\xi$(\textit{x},\textit{t})+$\eta$(\textit{x},\textit{t})) -- \textit{N(g(x,t)+m}(\textit{x},\textit{t})$\mathrm{\}}$ equal to zero. 

Denote $\varepsilon$${}^{2}$=1/\textit{N} and make replacements in the equation \eqref{GrindEQ__17_} 
\begin{equation} \label{GrindEQ__18_} 
u\left(x\right)=\varepsilon w_1\left(x\right),z\left(x\right)=\varepsilon w_2\left(x\right),H_2\left(y,t\right)=F_2\left(w_1,w_2,t,\varepsilon \right), 
\end{equation} 
obtain the equality 
\[\frac{\partial F_2(w_1,w_2,t,\varepsilon )}{\partial t}=j\frac{1}{\varepsilon }\int^{\infty }_0{\frac{\partial F_2(w_1,w_2,t,\varepsilon )}{\partial w_1(x)}}\Bigg\{\left(1-re^{j{\varepsilon w}_1\left(0\right)}\right)b\left(x,t\right)\Bigg. -\] 
\[-\left(1-r\right)e^{j\varepsilon w_2\left(0\right)}b\left(x,t\right)+\left(e^{-j\varepsilon w_1\left(x\right)}-1\right)\frac{S_1 '\left(x\right)}{S_1\left(x\right)}-\Bigg.j\varepsilon {w_1} '\left(x\right)\Bigg\}dx-\] 
\[-F_2(w_1,w_2,t,\varepsilon )\frac{1}{{\varepsilon }^2}\Bigg\{j\varepsilon \int^{\infty }_0{w_1\left(x\right)\frac{\partial g\left(x,t\right)}{\partial t}dx+\int^{\infty }_0{g\left(x,t\right)}}\Bigg. \cdot \] 
\begin{equation} \label{GrindEQ__19_} 
\cdot \Bigg\{\left(1-re^{j\varepsilon w_1\left(0\right)}\right)b\left(x,t\right)\Bigg.-\left(1-r\right)e^{j{\varepsilon w}_2\left(0\right)}b\left(x,t\right)+\left(e^{-j\varepsilon w_1\left(x\right)}-1\right)\frac{S_1 '\left(x\right)}{S_1\left(x\right)} -     
\end{equation} 
\[-\Bigg.{j\varepsilon w_1} '\left(x\right)\Bigg\}\Bigg.dx\Bigg\}+j\frac{1}{\varepsilon }\int^{\infty }_0{\frac{\partial F_2\left(w_1,w_2,t,\varepsilon \right)}{\partial w_2\left(x\right)}}\Bigg\{\left(e^{-j\varepsilon w_2\left(x\right)}-1\right)\frac{S_2 '\left(x\right)}{S_2\left(x\right)} \Bigg.-\] 
\[\Bigg.-j\varepsilon {w_2} '\left(x\right)\Bigg\}-F_2\left(w_1,w_2,t,\varepsilon \right)\frac{1}{{\varepsilon }^2}\Bigg\{j\int^{\infty }_0{w_2\left(x\right)\frac{\partial m\left(x,t\right)}{\partial t}dx+\int^{\infty }_0{m\left(x,t\right)}} \Bigg. \cdot\] 
\[ \cdot \Bigg\{\left(e^{-j{\varepsilon w}_2\left(x\right)}-1\right)\frac{S_2 '\left(x\right)}{S_2\left(x\right)}-\Bigg.{j{\varepsilon w}_2} '\left(x\right)\Bigg\}\Bigg.dx\Bigg\}.\Bigg.\] 

This equality \eqref{GrindEQ__19_} is a singularly perturbed equation. We proceed similarly to the first order asymptotic. We consider a subclass of solutions \textit{H}${}_{2}$(\textit{y,t}) of equation \eqref{GrindEQ__17_}, for which, by \eqref{GrindEQ__18_}, the function \textit{F}${}_{2}$(\textit{w${}_{1}$},\textit{w${}_{2}$,t,}$\varepsilon$) has the following properties.

There is a finite limit at $\varepsilon$$\mathrm{\to}$0 for \textit{F}${}_{2}$(\textit{w${}_{1}$},\textit{w${}_{2}$,t,}$\varepsilon$)
\begin{equation} \label{GrindEQ__20_} 
{\mathop{\mathrm{lim}}_{\varepsilon \to 0} F_2\left(w_1,w_2,t,\varepsilon \right)=F_2\left(w_1,w_2,t\right)\ },                                     
\end{equation} 
and its derivative with respect \textit{t}, $w_1\left(x\right)$ and $w_2\left(x\right)$
\begin{equation} \label{GrindEQ__21_} 
{\mathop{\mathrm{lim}}_{\varepsilon \to 0} \frac{{\partial F}_2\left(w_1,w_2,t,\varepsilon \right)}{\partial t}=\frac{F_2\left(w_1,w_2,t\right)}{\partial t}\ },                                             
\end{equation} 
\begin{equation} \label{GrindEQ__22_} 
{\mathop{\mathrm{lim}}_{\varepsilon \to 0} \frac{{\partial F}_2\left(w_1,w_2,t,\varepsilon \right)}{\partial w_1(x)}=\frac{F_2\left(w_1,w_2,t\right)}{\partial w_1(x)}\ },                                             
\end{equation} 
\begin{equation} \label{GrindEQ__23_} 
{\mathop{\mathrm{lim}}_{\varepsilon \to 0} \frac{{\partial F}_2\left(w_1,w_2,t,\varepsilon \right)}{\partial w_2(x)}=\frac{F_2\left(w_1,w_2,t\right)}{\partial w_2(x)}\ }.                                             
\end{equation} 
We state the following theorem.

\begin{theorem}
\label{thm2}
For $\varepsilon$$\mathrm{\to}$0 the solution of the task \eqref{GrindEQ__19_} has the form
\[F_2\left(w_1,w_2,t\right)=exp\Bigg\{-\frac{1}{2}\Bigg[\mathop{\int\!\!\!\!\int}\nolimits^{\infty }_0{w_1\left(y\right)w_1}\left(z\right)R_{11}\left(y,z,t\right)dydz\Bigg.\Bigg.+\] 
\[+2\mathop{\int\!\!\!\!\int}\nolimits^{\infty }_0{w_1\left(y\right)w_2}\left(z\right)R_{12}\left(y,z,t\right)dydz+\Bigg.\Bigg.\mathop{\int\!\!\!\!\int}\nolimits^{\infty }_0{w_2\left(y\right)w_2}\left(z\right)R_{22}\left(y,z,t\right)dydz\Bigg]\Bigg\},\] 
were \textit{R}${}_{11}$(\textit{y},\textit{z},\textit{t}), \textit{R}${}_{1}$${}_{2}$(\textit{y},\textit{z},\textit{t})  and \textit{R}${}_{22}$(\textit{y},\textit{z},\textit{t}) are cross-correlation functions, 
\end{theorem}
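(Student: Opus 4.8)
The plan is to follow the same route as in the proof of Theorem~\ref{thm1}, but now starting from equation \eqref{GrindEQ__19_} for $F_2(w_1,w_2,t,\varepsilon)$ instead of from \eqref{GrindEQ__6_}. First I would expand every exponential appearing in \eqref{GrindEQ__19_} in powers of $\varepsilon$ up to $o(\varepsilon^2)$, keeping the terms of orders $\varepsilon^0$, $\varepsilon^1$ and $\varepsilon^2$. The crucial point is that in the two groups of terms carrying the factor $1/\varepsilon^2$ (those multiplying $F_2$) the contributions of orders $\varepsilon^0$ and $\varepsilon^1$ vanish identically. The $\varepsilon^0$ part cancels because the constant pieces of $1-re^{j\varepsilon w_1(0)}$ and $(1-r)e^{j\varepsilon w_2(0)}$ contribute $(1-r)b-(1-r)b=0$. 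The $\varepsilon^1$ part cancels after one integration by parts in $x$ (moving the derivative off $w_1'(x)$, respectively $w_2'(x)$), precisely because $g(x,t)$ and $m(x,t)$ solve \eqref{GrindEQ__12_}--\eqref{GrindEQ__13_} together with their boundary conditions, i.e.\ by Theorem~\ref{thm1}; the residual boundary terms produced by the two integrations by parts annihilate one another thanks to $g(0,t)=r\int_0^\infty g(x,t)b(x,t)\,dx$ and $m(0,t)=(1-r)\int_0^\infty g(x,t)b(x,t)\,dx$. Hence $\frac{1}{\varepsilon^2}\{\cdots\}$ stays bounded as $\varepsilon\to0$ and tends to the $\varepsilon^2$-coefficient of $\{\cdots\}$.

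Next I would pass to the limit $\varepsilon\to0$ in the whole equality, using the regularity assumptions \eqref{GrindEQ__20_}--\eqref{GrindEQ__23_}. This produces a closed linear equation for $F_2(w_1,w_2,t)$ whose right-hand side is linear in the functional derivatives $\partial F_2/\partial w_1(x)$ and $\partial F_2/\partial w_2(x)$ and carries, in addition, a term proportional to $F_2$ with coefficient quadratic in $(w_1,w_2)$ --- the \emph{diffusion} term, built from the quantities $w_1(0)^2\int_0^\infty g b\,dx$, $w_2(0)^2\int_0^\infty g b\,dx$, $\int_0^\infty g(x,t)w_1(x)^2(S_1'(x)/S_1(x))\,dx$ and $\int_0^\infty m(x,t)w_2(x)^2(S_2'(x)/S_2(x))\,dx$.

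Then I would substitute the Gaussian ansatz displayed in the statement into this equation, compute $\partial F_2/\partial t$ and the functional derivatives (each of which is a linear functional of $w_1,w_2$ multiplied by $F_2$), divide by $F_2$, and equate the coefficients of the independent quadratic forms. Matching the coefficients of $w_1(y)w_1(z)$, $w_1(y)w_2(z)$ and $w_2(y)w_2(z)$ for $y,z>0$ yields a system of three first-order linear partial differential equations of transport type, with damping coefficients $S_i'/S_i$ and an inhomogeneous part concentrated on the diagonal $y=z$ (coming from the single-integral diffusion terms), for $R_{11}$, $R_{12}$ and $R_{22}$; matching the coefficients that involve $w_1(0)$ or $w_2(0)$ yields the boundary conditions at $y=0$ and $z=0$, expressed through $g$, $m$, $b$ and the $R_{ij}$ themselves. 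This shows that $F_2$ has the claimed Gaussian form, which is the assertion of the theorem; the interior equations are then solved, just as $g$ and $m$ were, by the method of characteristics.

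I expect the main obstacle to lie in the two delicate bookkeeping steps. The first is verifying that the $\varepsilon^1$-part of the $1/\varepsilon^2$-terms genuinely cancels: this needs the transport equations \eqref{GrindEQ__12_}--\eqref{GrindEQ__13_} and the boundary conditions for $g$ and $m$ to be used simultaneously, so that the leftover boundary terms from the two integrations by parts cancel against each other rather than individually. The second is reading off the correct boundary data for $R_{11}$, $R_{12}$ and $R_{22}$: the cross-correlation $R_{12}$ is created solely through the common birth mechanism at age zero --- its interior equation is homogeneous and the coupling enters only through the boundary --- while the terms $w_1(0)^2\int g b\,dx$ and $w_2(0)^2\int g b\,dx$ have to be distributed correctly, together with the boundary contributions coming from the functional-derivative terms, between the boundary data of $R_{11}$ and $R_{22}$.
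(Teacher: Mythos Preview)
Your proposal is correct and follows exactly the route the paper intends: the paper itself does not spell out the proof of Theorem~\ref{thm2} but simply states that it ``is completely analogous to the proof of Theorem~\ref{thm1}'' and omits it for bulkiness, and your outline is precisely that analogue --- expand the exponentials in \eqref{GrindEQ__19_}, use \eqref{GrindEQ__12_}--\eqref{GrindEQ__13_} with their boundary conditions to kill the $\varepsilon^{-1}$ and $\varepsilon^{0}$ pieces of the $1/\varepsilon^{2}$ block, pass to the limit via \eqref{GrindEQ__20_}--\eqref{GrindEQ__23_}, insert the Gaussian ansatz, and match quadratic coefficients. You have in fact supplied more detail on the cancellation mechanism than the paper does; the subsequent $\delta$-function decomposition of the $R_{ij}$ and the system \eqref{GrindEQ__24_} that the paper records after the theorem are exactly the output of the coefficient-matching step you describe.
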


in order to study the variance of the stochastic density of applications of zero-age and age x, as well as the correlation dependences of the stochastic density of applications of zero-age and age x, for all y$\mathrm{\ge}$0, z$\mathrm{\ge}$0 we write the cross-correlation functions in the form symmetric with respect to the arguments y and z:
\[R_{11}\left(y,z,t\right)={\sigma }^2_1\left(t\right)\delta \left(y\right)\delta \left(z\right)+r_1\left(y,t\right)\delta \left(z\right)+r_1\left(z,t\right)\delta \left(y\right)+{\sigma }_1\left(y,t\right){\sigma }_1\left(z,t\right)\delta \left(y-z\right),\] 
\[R_{22}\left(y,z,t\right)={\sigma }^2_2\left(t\right)\delta \left(y\right)\delta \left(z\right)+r_2\left(y,t\right)\delta \left(z\right)+r_2\left(z,t\right)\delta \left(y\right)+{\sigma }_2\left(y,t\right){\sigma }_2\left(z,t\right)\delta \left(y-z\right),\] 
\[R_{12}\left(y,z,t\right)={\sigma }^2_{12}\left(t\right)\delta \left(y\right)\delta \left(z\right)+r_{12}\left(y,t\right)\delta \left(z\right)+r_{12}\left(z,t\right)\delta \left(y\right)+{\sigma }_{12}\left(y,t\right){\sigma }_{12}\left(z,t\right)\delta \left(y-z\right),\] 
where $\delta \left(.\right)$ -- $\delta$-Dirac function. It follows from the form of cross-correlation functions \textit{R}${}_{11}$(\textit{y},\textit{z},\textit{t}), \textit{R}${}_{1}$${}_{2}$(\textit{y},\textit{z},\textit{t})  and \textit{R}${}_{22}$(\textit{y},\textit{z},\textit{t})  that the normalized numbers of applications of ages \textit{y}$\mathrm{>}$0 and \textit{z}$\mathrm{>}$0 for \textit{z}$\mathrm{\neq}$\textit{y} are stochastically independent, while the numbers of applications of zero age and age \textit{x} are dependent for all values of ages \textit{x}$\mathrm{\ge}$0 for \textit{b}(\textit{x},\textit{t})$\mathrm{>}$ 0, and
\[{\sigma }^2_1\left(0\right)=0,\ r_1\left(x,0\right)=0,r_1\left(0,t\right)=0,{\sigma }_1\left(0,t\right)=0,{\sigma }_1\left(x,0\right)=0,\] 
\[{\sigma }^2_2\left(0\right)=0,\ r_2\left(x,0\right)=0,r_2\left(0,t\right)=0,{\sigma }_2\left(0,t\right)=0,{\sigma }_2\left(x,0\right)=0,\] 
\[{\sigma }^2_{12}\left(0\right)=0,\ r_{12}\left(x,0\right)=0,r_{12}\left(0,t\right)=0,{\sigma }_{12}\left(0,t\right)=0,{\sigma }_{12}\left(x,0\right)=0\] 
are the initial and boundary conditions, and the summands of the cross-correlation functions are found from the system of partial differential equations
\[\frac{{\partial \sigma }^2_1\left(t\right)}{\partial t}=2S_1 '\left(0\right){\sigma }^2_1\left(t\right)+2r\int^{\infty }_0{r_1\left(x,t\right)}b\left(x,t\right)dx+rg\left(0,t\right),\] 
\[\frac{{\partial \sigma }^2_2\left(t\right)}{\partial t}=2S_2 '\left(0\right){\sigma }^2_2\left(t\right)+2(1-r)\int^{\infty }_0{r_{12}\left(x,t\right)}b\left(x,t\right)dx+(1-r)g\left(0,t\right),\] 
\[\frac{{\partial \sigma }^2_1\left(x,t\right)}{\partial t}+\frac{{\partial \sigma }^2_1\left(x,t\right)}{\partial x}=\left(2{\sigma }^2_1\left(x,t\right)-g\left(x,t\right)\right)\frac{S_1 '\left(x\right)}{S_1\left(x\right)},\] 
\[\frac{{\partial \sigma }^2_2\left(x,t\right)}{\partial t}+\frac{{\partial \sigma }^2_2\left(x,t\right)}{\partial x}=\left(2{\sigma }^2_2\left(x,t\right)-m\left(x,t\right)\right)\frac{S_2 '\left(x\right)}{S_2\left(x\right)},\] 
\[\frac{{\partial r}_1\left(x,t\right)}{\partial t}+\frac{{\partial r}_1\left(x,t\right)}{\partial x}=r_1\left(x,t\right)\left(S_1 '\left(0\right)+\frac{S_1 '\left(x\right)}{S_1\left(x\right)}\right)+r{\sigma }^2_1\left(x,t\right)b(x,t),\] 
\begin{equation} \label{GrindEQ__24_} 
\frac{{\partial r}_2\left(x,t\right)}{\partial t}+\frac{{\partial r}_2\left(x,t\right)}{\partial x}=r_2\left(x,t\right)\left(S_2 '\left(0\right)+\frac{S_2 '\left(x\right)}{S_2\left(x\right)}\right)+(1-r){\sigma }^2_{12}\left(x,t\right)b(x,t),          
\end{equation} 
\[\frac{{\partial \sigma }^2_{12}\left(t\right)}{\partial t}=(S_1 '\left(0\right)+S_2 '\left(0\right)){\sigma }^2_{12}\left(t\right)+(1-r)\int^{\infty }_0{r_1\left(x,t\right)}b\left(x,t\right)dx+r\int^{\infty }_0{r_{12}\left(x,t\right)}b\left(x,t\right)dx,\] 
\[\frac{{\partial \sigma }^2_{12}\left(x,t\right)}{\partial t}+\frac{{\partial \sigma }^2_{12}\left(x,t\right)}{\partial x}={\sigma }^2_{12}\left(x,t\right)\left(\frac{S_1 '\left(x\right)}{S_1\left(x\right)}+\frac{S_2 /\left(x\right)}{S_2\left(x\right)}\right),\] 
\[\frac{{\partial r}_{12}\left(x,t\right)}{\partial t}+\frac{{\partial r}_{12}\left(x,t\right)}{\partial x}=r_{12}\left(x,t\right)\left(S_2 '\left(0\right)+\frac{S_1 '\left(x\right)}{S_1\left(x\right)}\right)+(1-r){\sigma }^2_1\left(x,t\right)b(x,t),\] 
\[\frac{{\partial r}_{12}\left(x,t\right)}{\partial t}+\frac{{\partial r}_{12}\left(x,t\right)}{\partial x}=r_{12}\left(x,t\right)\left(S_1 '\left(0\right)+\frac{S_2 '\left(x\right)}{S_2\left(x\right)}\right)+r{\sigma }^2_{12}\left(x,t\right)b\left(x,t\right).\] 
The proof of Theorem~\ref{thm2}  is completely analogous to the proof of Theorem~\ref{thm1}. Due to its bulkiness, we skip it. 

 By virtue of \eqref{GrindEQ__18_} and also equality \textit{F}${}_{2}$(\textit{w${}_{1}$},\textit{w${}_{2}$,t}) from Theorem~\ref{thm2}, we can write the asymptotic equality for $\varepsilon$$\mathrm{\to}$0 
\[F_2\left(y,t\right)=F_2\left(w_1,w_2,t\right)+o(\varepsilon )=exp\left\{-\frac{1}{2}\left[\mathop{\int\!\!\!\!\int}\nolimits^{\infty }_0{w_1\left(y\right)w_1}\left(z\right)R_{11}\left(y,z,t\right)dydz\right.\right.+\] 
\[+2\mathop{\int\!\!\!\!\int}\nolimits^{\infty }_0{w_1\left(y\right)w_2}\left(z\right)R_{12}\left(y,z,t\right)dydz\left.\left.+\mathop{\int\!\!\!\!\int}\nolimits^{\infty }_0{w_2\left(y\right)w_2}\left(z\right)R_{22}\left(y,z,t\right)dydz\right]\right\}+o\left(\varepsilon \right).\] 
Therefore, we can write
\[F\left(y,t\right)\mathrm{=}H_2\left(\mathrm{y,}t\right)exp\left\{jN\int^{\mathrm{\infty }}_0{\mathrm{(u}\left(x\right)g\left(x,t\right)\mathrm{+z}\left(x\right)m\left(x,t\right)\mathrm{)}dx}\right\}=\] 
\[=exp\left\{jN\int^{\mathrm{\infty }}_0{\left(\mathrm{u}\left(x\right)g\left(x,t\right)\mathrm{+z}\left(x\right)m\left(x,t\right)\right)dx}\right.-\] 
\[-\frac{1}{2}\left[\mathop{\int\!\!\!\!\int}\nolimits^{\infty }_0{w_1\left(y\right)w_1}\left(z\right)R_{11}\left(y,z,t\right)dydz+2\mathop{\int\!\!\!\!\int}\nolimits^{\infty }_0{w_1\left(y\right)w_2}\left(z\right)R_{12}\left(y,z,t\right)dydz\right.+\] 
\[\left.\left.+\mathop{\int\!\!\!\!\int}\nolimits^{\infty }_0{w_2\left(y\right)w_2}\left(z\right)R_{22}\left(y,z,t\right)dydz\right]\right\}+o\left(\varepsilon \right).\] 

\begin{definition}
A function
\[{\tilde{F}}_2(y,t)=exp\left\{jN\int^{\mathrm{\infty }}_0{\left(\mathrm{u}\left(x\right)g\left(x,t\right)\mathrm{+z}\left(x\right)m\left(x,t\right)\right)dx}\right. -\] 
\[-\frac{1}{2}\left[\mathop{\int\!\!\!\!\int}\nolimits^{\infty }_0{w_1\left(y\right)w_1}\left(z\right)R_{11}\left(y,z,t\right)dydz+2\mathop{\int\!\!\!\!\int}\nolimits^{\infty }_0{w_1\left(y\right)w_2}\left(z\right)R_{12}\left(y,z,t\right)dydz\right.+\] 
\[\left.\left.+\mathop{\int\!\!\!\!\int}\nolimits^{\infty }_0{w_2\left(y\right)w_2}\left(z\right)R_{22}\left(y,z,t\right)dydz\right]\right\}\] 
is called the second order asymptotic of the characteristic functional\textit{ F}(y,\textit{t}).\textit{}
\end{definition}

We write down the solution of the system \eqref{GrindEQ__24_} of partial differential equations of Theorem~\ref{thm2}
\[{\sigma }^2_1\left(t\right)=e^{2S_1 '\left(0\right)t\ }\left\{r\int^t_0{g\left(0,y\right)e^{-2S_1 '\left(0\right)y\ }dy+}2r\int^t_0{e^{-2S '_1\left(0\right)y\ }\int^{\infty }_0{r_1(x,y)}b\left(x,y\right)dxdy}\right\}, for \:{x}\ge 0, t\ge 0,\] 
\[{\sigma }^2_2\left(t\right)=e^{2S '_2\left(0\right)t\ }\left\{(1-r)\int^t_0{g\left(0,y\right)e^{-2S '_2\left(0\right)y\ }dy+}2(1-r)\int^t_0{e^{-2S '_2\left(0\right)y\ }\int^{\infty }_0{r_{12}(x,y)}b\left(x,y\right)dxdy}\right\},\] 
\[for \:{x}\ge 0, t\ge 0,\] 
\[r_1\left(x,t\right)={re}^{S '_1\left(0\right)x\ }S_1(x)\int^x_0{e^{-S '_1\left(0\right)y\ }S^{-1}_1\left(y\right){\sigma }^2_1\left(y,t\right)}b\left(y,t\right)dy, for \:{x}\ge 0,t\ge 0,\] 
\[r_2\left(x,t\right)=0, for \:{x}\ge 0,t\ge 0,\] 
\[{\sigma }^2_1\left(x,t\right)=S^2_1\left(x\right)\left\{\int^{x-t}_0{g\left(y,0\right)S '_1\left(y\right)S^{-3}_1\left(y\right)dy-\int^x_0{g\left(y,t\right)S '_1\left(y\right)S^{-3}_1\left(y\right)dy}}\right\},\] 
\[for \:{x}\ge t, x\ge 0, t\ge 0,\] 
\[{\sigma }^2_1\left(x,t\right)=-S^2_1\left(x\right)\int^x_0{g\left(y,t\right)S '_1\left(y\right)S^{-3}_1\left(y\right)dy}, for \:{x}\ge t,x\ge 0, t\ge 0.\] 
\begin{equation} \label{GrindEQ__25_} 
{\sigma }^2_2\left(x,t\right)=S^2_2\left(x\right)\left\{\int^{x-t}_0{m\left(y,0\right)S '_2\left(y\right)S^{-3}_2\left(y\right)dy-\int^x_0{m\left(y,t\right)S '_2\left(y\right)S^{-3}_2\left(y\right)dy}}\right\}, 
\end{equation} 
\[for \:{x}\ge t, x\ge 0, t\ge 0,\] 
\[{\sigma }^2_2\left(x,t\right)=-S^2_2\left(x\right)\int^x_0{m\left(y,t\right)S '_2\left(y\right)S^{-3}_2\left(y\right)dy}, for \:{x\ge t, x\ge 0, t\ge 0},\] 
\[{\sigma }^2_{12}\left(t\right)=e^{(S '_1\left(0\right)+S '_2\left(0\right))t}\left\{\left(1-r\right)\int^t_0{e^{-\left(S '_1\left(0\right)+S '_2\left(0\right)\right)y}}\int^{\infty }_0{r_1\left(x,y\right)}b\left(x,y\right)dydx+\right.\] 
\[+r\left.\int^t_0{e^{-(S '_1\left(0\right)+S '_2\left(0\right))y}}\int^{\infty }_0{r_{12}\left(x,t\right)}b\left(x,t\right)dydx\right\} for \:{x\ge 0, t\ge 0},\] 
\[{\sigma }^2_{12}\left(x,t\right)=0, for \:{x\ge 0, t\ge 0},\] 
\[r_{12}\left(x,t\right)=0,25\ (1-{r)e}^{{(S} '_1\left(0\right)+S '_2\left(0\right))x\ }S_1\left(x\right)S_2\left(x\right) \cdot \] 
\[\cdot \int^x_0{e^{-(S '_1\left(0\right)+S '_2\left(0\right))y\ }S {-1}_1\left(y\right)\ S^{-1}_2\left(y\right){\sigma }^2_1\left(y,t\right)}b\left(y,t\right)dy, for \:{x\ge 0, t\ge 0}.\] 

The main result is the analytical solutions \eqref{GrindEQ__12_}, \eqref{GrindEQ__13_} and \eqref{GrindEQ__25_} which determine the average characteristics and second moments for the stochastic densities of requests $\xi$(\textit{x},\textit{t}) and $\eta$(\textit{x},\textit{t}), as well as the proof that the asymptotic distribution is Gaussian. In this way the research of the autonomous non-Markov queuing system with an unlimited number of devices is completed.

\section{Conclusion}

The article proposes a stochastic model of demographic growth in the form of an autonomous non-Markov queuing system with an unlimited number of devices and two types of applications. Its research was carried out using the virtual phase method, which consists in approximating the service time by the sum of a random number of independent identically exponentially distributed random variables and a limit transition with an unlimited increase in the number of phases and a proportional decrease in the duration of each phase. Using the modified method of asymptotic analysis of a stochastic age-specific density for a number of applications served in the system at time t we were found the main probabilistic characteristics of the number of served applications in the system and was proofed that their asymptotic distribution is Gaussian. To apply the obtained results to the study of demographic processes, it is necessary to choose the explicit form of unknown functions (survival function and a fertility function) and set the parameter values.

Created mathematical model has quite wide possibilities for generalization and modification. This mathematical model of human population growth can be applied to predict the demographic situation in any country and in the world as a whole, including for forecasting the population size taking into account the age structure, marital structure and social status, for forecasting migration processes.

\acks It is a pleasure to thank Prof. A.A. Nazarov for helpful advice. We also wish to thank the anonymous referees whose comments greatly helped to improve the paper.

\end{document}